\newtheorem{thm}{Theorem}
\newtheorem{prop}[thm]{Proposition}%
 \newtheorem{cor}[thm]{Corollary}
 \theoremstyle{definition}
 \newtheorem{defn}[thm]{Definition}
 \theoremstyle{remark}
 \newtheorem{rem}[thm]{Remark}
 \newtheorem*{ex}{Example}
 \numberwithin{equation}{section}
\title{On the connections between $F$-contractions and Meir-Keeler contractions}
\author{Laura Manolescu \\
       Department of Mathematics, Politehnica Universiy of Timi\c soara \\
       \texttt{laura.manolescu@upt.ro} 
        \and
          Pa\c sc G\u avru\c ta \\
        Department of Mathematics, Politehnica Universiy of Timi\c soara\\
      \texttt{pgavruta@gmail.com}}
\date{}
\begin{document}

\maketitle

\begin{abstract}
In this paper, we resolve an open problem concerning the connection between $F$-contractions (Wardowski contractions) and Meir-Keeler contractions. We prove that for a nondecreasing function $F$ that has a point of discontinuity on the right, there exists an $F-$contraction that is not a Meir-Keeler contraction. Moreover, we give a condition on nonlinear $(\varphi,F)$-contractions to be Meir-Keeler contractions. In the final part of the paper, we give a class of $(E,F)-$contractions (in the sense of the paper arXiv:2009.13157 [math.FA] 28 Sep 2020) that are Meir-Keeler contractions.
\end{abstract}

\section{Introduction and preliminaries }\label{sec1}
Let $(X,d)$ be a metric space and $T:X\rightarrow X$ be a mapping. The well-known result of Banach states that every self mapping $T$ defined on a complete metric space which satisfies the Banach contraction inequality, i.e.
$$(\exists) \lambda\in[0,1): d(Tx,Ty)\leq\lambda d(x,y),~(\forall)x,y\in X$$
is a Picard operator ($T$ has a unique fixed $u$ and for every $x\in X,$ the sequence
$\{T^nx\}$ converges to $u$). For a simple proof, see \cite{Palais}. This fundamental result, known as Banach fixed point theorem or Banach contraction principle was generalized in various ways. We mention here some of these generalizations.\\
 \begin{defn} We say that  $T$ is a Meir-Keeler contraction if given  $\varepsilon>0,$ there exists $\delta>0$ such that
 $$(\forall)~x,y\in X,~\varepsilon\leq d(x,y)<\varepsilon+\delta \Longrightarrow~d(Tx, Ty)<\varepsilon.$$
 \end{defn}
 Meir and Keeler \cite{Meir} prove the following theorem. 
\begin{thm}(Meir, Keeler \cite{Meir}) Let $(X,d)$ be a complete metric space and $T$ be a Meir-Keeler contraction. Then $T$ is a Picard operator.
\end{thm}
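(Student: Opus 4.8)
The plan is to follow the classical argument, organized around the Picard iteration. First I would record the elementary observation that a Meir--Keeler contraction is strictly contractive: given $x\neq y$, applying the definition with $\varepsilon=d(x,y)$ forces $d(Tx,Ty)<\varepsilon=d(x,y)$. In particular $T$ is (uniformly) continuous and can have at most one fixed point, which settles uniqueness. I would also note that the defining implication, if it holds for some $\delta>0$, holds for every smaller positive number, so one may always shrink $\delta$ and assume $0<\delta<\varepsilon$.

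Next, fix $x_0\in X$, put $x_{n+1}=Tx_n$ and $c_n=d(x_n,x_{n+1})$. If some $c_n=0$ we are done, so assume $c_n>0$ for all $n$; then strict contractivity gives $c_{n+1}<c_n$, so $c_n\downarrow c$ for some $c\ge 0$. To see $c=0$, suppose $c>0$ and apply the Meir--Keeler condition to $\varepsilon=c$: choosing $n$ with $c\le c_n<c+\delta$ (possible since $c_n\downarrow c$) yields $c_{n+1}=d(Tx_n,Tx_{n+1})<c$, contradicting $c_{n+1}\ge c$. Hence $c_n\to 0$.

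The heart of the proof is showing $\{x_n\}$ is Cauchy. Fix $\varepsilon>0$, take the associated $\delta\in(0,\varepsilon)$, and choose $N$ with $c_N<\delta$ (so $c_n<\delta$ for all $n\ge N$ by monotonicity). I would then prove by induction on $m\ge N$ that $d(x_N,x_m)<\varepsilon+\delta$. The base cases $m=N,\,N+1$ are immediate since $d(x_N,x_{N+1})=c_N<\delta$. For the inductive step, assuming $d(x_N,x_m)<\varepsilon+\delta$, split into two cases: if $d(x_N,x_m)<\varepsilon$, use that $T$ is nonexpansive to get $d(x_{N+1},x_{m+1})\le d(x_N,x_m)<\varepsilon$; if $\varepsilon\le d(x_N,x_m)<\varepsilon+\delta$, use the Meir--Keeler implication directly to get $d(x_{N+1},x_{m+1})<\varepsilon$. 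In either case the triangle inequality gives $d(x_N,x_{m+1})\le c_N+d(x_{N+1},x_{m+1})<\delta+\varepsilon$, completing the induction. Consequently, given any $\eta>0$, applying this with $\varepsilon=\eta/4$ produces $N$ with $d(x_N,x_m)<2\varepsilon=\eta/2$ for all $m\ge N$, whence $d(x_n,x_m)\le d(x_n,x_N)+d(x_N,x_m)<\eta$ for all $n,m\ge N$; so $\{x_n\}$ is Cauchy.

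Finally, completeness gives $x_n\to u$ for some $u\in X$, and continuity of $T$ gives $x_{n+1}=Tx_n\to Tu$, so $Tu=u$. Since $x_0$ was arbitrary and the fixed point is unique, every orbit converges to the same point $u$, i.e.\ $T$ is a Picard operator. The main obstacle is the Cauchy step: identifying the right quantity to induct on --- the distance $d(x_N,x_m)$ from the fixed anchor $x_N$, bounded by $\varepsilon+\delta$ --- and handling the two regimes, ``already inside $\varepsilon$'' versus ``in the critical window $[\varepsilon,\varepsilon+\delta)$'', is precisely where the Meir--Keeler hypothesis does its real work; the rest of the argument is soft.
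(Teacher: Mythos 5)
The paper does not prove this statement at all: it is quoted as a known result of Meir and Keeler, with the proof delegated to the original 1969 reference. Your argument is correct and complete: the strict contractivity observation, the monotone decrease of $c_n=d(x_n,x_{n+1})$ with the Meir--Keeler condition applied at $\varepsilon=c$ to force $c_n\to 0$, the induction on $m$ bounding $d(x_N,x_m)<\varepsilon+\delta$ with the two regimes handled by nonexpansiveness and by the Meir--Keeler window respectively, and the final passage to the limit via completeness and continuity all go through (the only point worth making explicit is the trivial subcase $x_N=x_m$ in the first regime, where $d(x_{N+1},x_{m+1})=0<\varepsilon$). For comparison, the original Meir--Keeler proof obtains the Cauchy property by contradiction, assuming $d(x_m,x_n)>2\varepsilon$ along subsequences and locating an intermediate index $j$ with $d(x_n,x_j)$ trapped in the critical window; your direct induction anchored at $x_N$ is the now-standard streamlined version and is, if anything, cleaner.
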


In 2012, D. Wardowski \cite{Wardowski} introduced a new type of contraction mappings named $F-$ contractions (or Wardowski contractions). We denote by $\mathcal{F}$ the set of all functions $$F:(0,\infty)\rightarrow\mathbb{R}$$ which satisfies  the following conditions

\begin{enumerate}[$(F_1)$]
    \item $F$ is strictly increasing;
    \item for each sequence $\{t_n\}_{n\in\mathbb{N}}\subset(0,\infty),$ $\displaystyle \lim_{n\rightarrow\infty}t_n=0$ iff  $\displaystyle \lim_{n\rightarrow\infty}F(t_n)=-\infty $
    \item there exists $k\in(0,1)$ such that $\displaystyle \lim_{t\rightarrow 0_+}t^kF(t)=0,~\textrm{for any}~ t\in(0,\infty).$
\end{enumerate}
Examples of functions $F,$ which verifies conditions $(F_1)-(F_3):$
\begin{enumerate}[$1)$]
\item $F:(0,\infty)\rightarrow\mathbb{R}, F(t)=\ln t$
\item $F:(0,\infty)\rightarrow\mathbb{R}, F(t)=\ln t+t$
\item $F:(0,\infty)\rightarrow\mathbb{R}, F(t)=-\frac{1}{t^{1/n}}, n\in\mathbb{N}.$
\end{enumerate}

\begin{defn} (Wardowski, \cite{Wardowski}) Let $(X,d)$ be a metric space and $T:X\rightarrow X$ be a mapping. We say that $T$ is a $F-$contraction relative to $\mathcal{F}$ if there exists $\tau>0$ and $F\in\mathcal{F}$ such that the following inequality holds \begin{equation}\label{defFcontr}\tau+F(d(Tx,Ty))\leq F(d(x,y)),\end{equation}
for all $x,y\in X,$ with $d(Tx,Ty)>0.$
\end{defn}
From $(F_1)$ and (\ref{defFcontr}), it follows that a $F-$contraction is a contractive mapping, that is:
$$d(Tx,Ty)<d(x,y),\textrm{for}~x,y\in X,~x\neq y.$$
 In \cite{Wardowski}, Wardowski prove the following result.
 
\begin{thm}(Wardowski \cite{Wardowski}) Let $(X,d)$ be a complete metric space and $T$ be a  $F-$contraction relative to $\mathcal{F}$. Then $T$ is a Picard operator.
\end{thm}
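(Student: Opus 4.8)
The plan is to run the standard Picard iteration argument, exploiting the three defining properties of $\mathcal{F}$ in turn. Fix an arbitrary $x_0\in X$ and put $x_n:=T^nx_0$. If $x_{n_0}=x_{n_0+1}$ for some $n_0$, then $x_{n_0}$ is a fixed point and the orbit is eventually constant, so I may assume $\gamma_n:=d(x_n,x_{n+1})>0$ for all $n$. Applying \eqref{defFcontr} with $x=x_{n-1}$, $y=x_n$ and iterating yields
$$F(\gamma_n)\le F(\gamma_{n-1})-\tau\le\cdots\le F(\gamma_0)-n\tau ,$$
hence $F(\gamma_n)\to-\infty$, and by $(F_2)$ this forces $\gamma_n\to 0$.

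The next, and decisive, step uses $(F_3)$ to turn $\gamma_n\to0$ into summability. Choose $k\in(0,1)$ with $\lim_{t\to0_+}t^kF(t)=0$. Multiplying the displayed estimate by $\gamma_n^k>0$ gives
$$0\le n\tau\,\gamma_n^k\le\gamma_n^kF(\gamma_0)-\gamma_n^kF(\gamma_n),$$
and passing to the limit (using $\gamma_n\to0$ together with $(F_3)$, so that both terms on the right tend to $0$) shows $n\gamma_n^k\to0$. Consequently there is $N$ with $\gamma_n\le n^{-1/k}$ for $n\ge N$; since $1/k>1$, the series $\sum_n\gamma_n$ converges, and therefore $\{x_n\}$ is a Cauchy sequence.

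By completeness of $(X,d)$ the sequence converges, say $x_n\to u$. Because an $F$-contraction satisfies $d(Tx,Ty)<d(x,y)$ for $x\ne y$, it is continuous, so $x_{n+1}=Tx_n\to Tu$, and uniqueness of limits gives $Tu=u$. Finally, if $u$ and $v$ were distinct fixed points, then $d(Tu,Tv)=d(u,v)>0$ and \eqref{defFcontr} would yield $\tau+F(d(u,v))\le F(d(u,v))$, contradicting $\tau>0$; hence the fixed point is unique and $T$ is a Picard operator.

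I expect the only real obstacle to be the passage from $\gamma_n\to0$ to the Cauchy property, which is precisely where $(F_3)$ enters: the trick of multiplying the iterated inequality by $\gamma_n^k$ to extract a summable bound for $\gamma_n$ is the heart of the matter. Establishing the limit via completeness and deriving uniqueness from $\tau>0$ are then routine.
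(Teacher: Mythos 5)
Your proof is correct and follows essentially the original Wardowski argument (which the paper only cites, without reproducing a proof): the iterated inequality $F(\gamma_n)\le F(\gamma_0)-n\tau$, then $(F_2)$ to get $\gamma_n\to 0$, the multiplication by $\gamma_n^k$ with $(F_3)$ to obtain $n\gamma_n^k\to 0$ and hence summability and the Cauchy property, and finally completeness, continuity of the contractive map $T$, and $\tau>0$ for existence and uniqueness. All steps are justified, including the application of \eqref{defFcontr} only when $\gamma_n>0$, so there is nothing to add.
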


For interesting results concerning Wardowski contractions and some connections with other type of contractions see the papers \cite{Piri}, \cite{Suzuki}, \cite{Turinici2}. See also the survey \cite{Karapinar}.

In the paper \cite{Cvetkovic}, the author give an example of a complete metric space and an example of Meir-Keeler contraction that is not a Wardowski contraction. Also, the author try to prove that on any complete metric space, any Wardowski contraction is a Meir-Keeler contraction. We correct the result from \cite{Cvetkovic}, by giving an example of a Wardowski contraction that is not Meir-Keeler contraction. 

Also, in this paper, we give a condition on nonlinear $(\varphi,F)$-contractions to be Meir-Keeler contractions and we give a class of $(E,F)-$contraction that are Meir-Keeler contractions.

\section{A class of Wardowski contractions that are not Meir-Keeler contractions}

 In this section, we construct a class of a Wardowski contractions that are not Meir-Keeler contractions.
More precisely, we prove the following result.\\

\begin{thm}\label{ourfirstmainthm} Let be $F:(0,\infty)\rightarrow\mathbb{R}$ be a nondecreasing function that has a point of discontinuity on the right. Then there is an $F-$contraction that is not a Meir-Keeler contraction.
\end{thm}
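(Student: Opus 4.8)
The plan is to produce, for the given nondecreasing $F$ with a right-discontinuity at some point $t_0>0$, an explicit (and in fact complete) metric space $(X,d)$ together with a map $T\colon X\to X$ that satisfies the defining inequality $\tau+F(d(Tx,Ty))\le F(d(x,y))$ for a suitable $\tau>0$, yet fails the Meir--Keeler condition for $\varepsilon=t_0$. First I would record what the hypothesis buys us: set $c:=F(t_0)$ and $c':=\lim_{t\to t_0^+}F(t)$. By monotonicity $c'=\inf_{t>t_0}F(t)$, so $c'\ge c$, and the discontinuity forces $c'>c$; moreover $F(t)\ge c'$ for every $t>t_0$ while $F(t)\le c$ for every $t\in(0,t_0]$. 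Fix $\tau$ with $0<\tau<c'-c$ and a strictly decreasing sequence $s_n\downarrow 0$ with $0<s_n\le t_0$.

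Next I would set up the space and the map. Let $X=\{p,q\}\cup\{a_n:n\ge 1\}\cup\{b_n:n\ge 1\}$ be a set of distinct points, and define $d(a_n,b_n)=t_0+s_n$, $d(p,q)=t_0$, and $d(x,y)=2t_0$ for every other pair of distinct points. Every nonzero distance then lies in $[t_0,2t_0]$ and $2t_0\le t_0+t_0$, so the triangle inequality is automatic and $(X,d)$ is a metric space. Define $T$ by $Ta_n=p$, $Tb_n=q$ for all $n$, and $Tp=Tq=p$. The guiding idea is that the only pairs $(x,y)$ for which $d(Tx,Ty)>0$ are those that $T$ sends onto the unordered pair $\{p,q\}$; for all of these the image distance is exactly $t_0$, and the original distance is either $t_0+s_n$ or $2t_0$ --- in both cases strictly larger than $t_0$.

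Then comes the verification, which is a short case check over the pair types. For any pair with $d(Tx,Ty)=0$ the inequality is vacuous. For the remaining pairs $d(Tx,Ty)=t_0$ and $d(x,y)\in\{\,t_0+s_n,\ 2t_0\,\}\subset(t_0,\infty)$, hence
\[
\tau+F\bigl(d(Tx,Ty)\bigr)=\tau+c<c'\le F\bigl(d(x,y)\bigr),
\]
so $T$ is an $F$-contraction with constant $\tau$. To see that $T$ is not Meir--Keeler, take $\varepsilon=t_0$: given any $\delta>0$ choose $n$ with $s_n<\delta$; then $\varepsilon\le d(a_n,b_n)=t_0+s_n<\varepsilon+\delta$ while $d(Ta_n,Tb_n)=d(p,q)=t_0=\varepsilon$, which is not $<\varepsilon$. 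Thus no $\delta$ works for $\varepsilon=t_0$.

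The point that dictates the shape of the construction --- and the only real obstacle --- is the following. The Meir--Keeler-violating pairs $(a_n,b_n)$ must be carried by $T$ to pairs at distance \emph{exactly} $t_0$: if their images had distance even slightly above $t_0$, the contraction inequality would require $F$ to grow by at least $\tau$ between two numbers both tending to $t_0^+$, which is impossible when $F$ happens to be continuous (or merely slowly increasing) just above $t_0$. Dually, no pair with distance $\le t_0$ can be allowed to have positive image distance, because $F$ may be constant on $(0,t_0]$ and so provides no slack there; funnelling all such ``short'' configurations into the single point $p$ (via $Tp=Tq=p$) is exactly what keeps the scheme consistent for an arbitrary nondecreasing $F$. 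Everything else is routine bookkeeping.
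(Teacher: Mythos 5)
Your proposal is correct and follows essentially the same strategy as the paper: exploit the jump at the right-discontinuity $t_0$ (taking $\tau$ at most $F(t_0^+)-F(t_0)$), map the whole space onto a two-point image at distance exactly $t_0$, arrange every pair with distinct images to lie at distance strictly greater than $t_0$, and defeat the Meir--Keeler condition at $\varepsilon=t_0$ via pairs whose distances decrease to $t_0$ while their images stay at distance $t_0$. The only difference is cosmetic: you build an abstract uniformly discrete space with distances in $\{t_0,\ t_0+s_n,\ 2t_0\}$, whereas the paper realizes the same scheme as a subset of $\mathbb{R}$ with the usual metric, at the cost of a slightly longer case check on the cross distances.
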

\begin{proof} We denote by $t_0\in(0,\infty)$ a point of discontinuity on the right for $F.$ We have the following properties
$$\tau:=F(t_0+0)-F(t_0)>0$$
and $$F(t)\geq F(t_0+0),~t>t_0.$$
Hence, for $t>t_0,$ we have
\begin{equation}\label{eeq1}
F(t)-F(t_0)\geq F(t_0+0)-F(t_0)=\tau.
\end{equation}
We construct a metric space $(X,d)$ and define an operator $T:X\rightarrow X$
such that $T$ is a $F-$contraction, but is not a Meir-Keeler contraction.
We denote $$A=\{0\}\cup\{t_0\}\cup \{kn\}_{n\geq 1}$$
$$B=\{km+t_0+\gamma_m\}_{m\geq 1}$$
where $k=[2t_0]+1$ and $(\gamma_m)$ is a sequence with the following properties
\begin{equation}\label{eq2}
0<\gamma_m<k-2t_0,~\textrm{for all}~m\geq 1
\end{equation}
\begin{equation}\label{eq3}
\gamma_m\rightarrow 0,~\textrm{as}~ m\rightarrow\infty
\end{equation}
We consider metric space $(X,d)$, where $X=A\cup B$ and $d$ is the usual metric on $\mathbb{R}$. Also, we define the operator $T:X\rightarrow X,$
$$Tx=\begin{cases}
0,~x\in A\\
t_0,~x\in B
\end{cases}$$
We prove that $T$ is $F-$contraction, i.e.
\begin{equation}\label{Fcontr}
Tx\neq Ty\Longrightarrow \tau+F(d(Tx, Ty))\leq F(d(x,y)).
\end{equation}
We denote $x_n=kn,$ $n\geq 1$ and $y_m=km+t_0+\gamma_m,$ $m\geq 1.$
We have $$d(y_n,x_n)=t_0+\gamma_n>t_0$$
$$d(y_m,0)=t_m+t_0+\gamma_m>t_0$$
$$d(y_m,t_0)=km+\gamma_m>k>2t_0>t_0$$
We evaluate $d(x_n,y_m).$ For $m\leq n-1,$ we have
$$d(x_n,y_m)\geq k-t_0-\gamma_m>t_0~\textrm{because}~t_m<k-2t_0.$$
If $m\geq n+1,$ we have $$d(y_m,x_n)\geq k+t_0+\gamma_m>t_0.$$
Hence, if $Tx\neq Ty,$ it follows $x\in A,$ $y\in B$ and then because $d(y_m,x_n)>t_0,$ from (\ref{eeq1}), we have
$$F(d(y_m,x_n))-F(t_0)\geq\tau.$$  Hence
$$F(d(Tx_n, Ty_m))+\tau\leq F((d(y_n,x_n)))$$
so $T$ is $F-$contraction.

We show now that $T$ is not Meir-Keeler contraction. We observe that $T$ does not fulfill the condition
$$(\forall) \varepsilon>0, (\exists) \delta>0: \varepsilon\leq d(x,y)<\varepsilon+\delta\Longrightarrow d(Tx,Ty)<\varepsilon,$$
for $\varepsilon=t_0>0.$ Indeed, $$(\forall) \delta>0~\textrm{we have}~ t_0<d(x_n,y_n)<t_0+\delta,~\textrm{for a sufficient large n}$$
because $d(x_n,y_n)=t_0+\gamma_n\rightarrow t_0,~\textrm{as}~n\rightarrow\infty.$
But $d(Tx_n, Ty_n)=t_0\geq t_0.$
\end{proof}
\begin{rem} In the paper \cite{Manolescu}, we raised the question if a $F-$contraction relative to nondecreasing function is Meir-Keeler contraction. In the Theorem \ref{ourfirstmainthm}, we
respond also to this problem.
\end{rem}
\section{A class of nonlinear $F-$contractions that are Meir-Keeler contractions}
In 2018, D. Wardowski introduced a new type of $F-$contraction, named nonlinear $F-$contraction (or $(\varphi,F)-$contraction) and give conditions for a $(\varphi,F)-$contraction to be a Picard operator.

\begin{defn}(Wardowski \cite{Wardowski1})
Let be the functions $F:(0,\infty)\rightarrow\mathbb{R}$ and $\varphi:(0,\infty)\rightarrow (0,\infty).$ Let $(X,d)$ be a  metric space and $T:X\rightarrow X$. We say that $T$ is a nonlinear $F-$contraction (or $(\varphi,F)$-contraction) if
\begin{equation}\label{eqW}
   (\forall)~x,y\in X, Tx\neq Ty\Longrightarrow \varphi(d(x,y))+
   F(d(Tx, Ty))\leq F(d(x,y)).  
\end{equation}
\end{defn}
\begin{thm} (Wardowski \cite{Wardowski1})\label{Wardowski1}
Let $(X,d)$ be a complete metric space and $T:X\rightarrow X$ be a $(\varphi,F)$-contraction. We suppose that 
\begin{enumerate}[$(i)$]
    \item $F$ is strictly increasing;
    \item $\displaystyle \lim_{t\rightarrow 0^+}F(t)=-\infty;$
    \item $\displaystyle \liminf_{s\rightarrow t^+}\varphi(s)>0,~\textrm{for all}~ t\geq 0.$
\end{enumerate}
Then $T$ is a Picard operator.
\end{thm}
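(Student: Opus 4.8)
\medskip

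\noindent\emph{Proof idea.} The plan is to run the classical Picard iteration in the nonlinear $F$-contraction setting. Fix $x_0\in X$ and set $x_n=T^nx_0$. If $x_{n_0}=x_{n_0+1}$ for some $n_0$, then $x_{n_0}$ is a fixed point and the orbit is eventually constant, so assume $d_n:=d(x_n,x_{n+1})>0$ for all $n$. Feeding the consecutive pair $(x_{n-1},x_n)$ into \eqref{eqW} gives $\varphi(d_{n-1})+F(d_n)\le F(d_{n-1})$, hence $(F(d_n))_n$ is strictly decreasing; by $(i)$ the sequence $(d_n)_n$ is then strictly decreasing, so it converges to some $d\ge 0$. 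If $d>0$, then $d_n\downarrow d$ with $d_n>d$, so by $(iii)$ there exist $\rho,\eta>0$ with $\varphi(s)>\rho$ on $(d,d+\eta)$; thus $\varphi(d_n)>\rho$ for all large $n$, and iterating the inequality yields $F(d_n)\le F(d_0)-\sum_{i<n}\varphi(d_i)\to-\infty$, contradicting $F(d_n)\ge F(d)\in\mathbb{R}$. Therefore $d_n\to 0$. (Condition $(ii)$ gives an alternative way to pass from $F(d_n)\to-\infty$ to $d_n\to 0$, but the monotonicity argument above already suffices.)

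The heart of the argument is showing that $(x_n)_n$ is Cauchy. Suppose not; then there is $\varepsilon_0>0$ such that $d(x_m,x_n)\ge\varepsilon_0$ for arbitrarily large $m>n$. Since $F$ is monotone it has at most countably many discontinuities, so I may choose $\varepsilon\in(0,\varepsilon_0]$ that is a point of continuity of $F$; this $\varepsilon$ still witnesses the failure of the Cauchy property. For each $k$ pick $m_k>n_k\ge k$ with $d(x_{m_k},x_{n_k})\ge\varepsilon$, taking $m_k$ minimal with this property. Since $d_n\to 0$ we get $m_k\ge n_k+2$ and $d(x_{m_k-1},x_{n_k})<\varepsilon$ for large $k$, whence $\varepsilon\le d(x_{m_k},x_{n_k})\le d_{m_k-1}+d(x_{m_k-1},x_{n_k})<d_{m_k-1}+\varepsilon$; as $m_k\to\infty$ this forces $d(x_{m_k},x_{n_k})\to\varepsilon$. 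From $|d(x_{m_k+1},x_{n_k+1})-d(x_{m_k},x_{n_k})|\le d_{m_k}+d_{n_k}\to 0$ we also get $d(x_{m_k+1},x_{n_k+1})\to\varepsilon$, so in particular $d(x_{m_k+1},x_{n_k+1})>\varepsilon/2>0$ for large $k$, i.e. $Tx_{m_k}\ne Tx_{n_k}$, and \eqref{eqW} applies:
\[
\varphi\!\left(d(x_{m_k},x_{n_k})\right)\;\le\;F\!\left(d(x_{m_k},x_{n_k})\right)-F\!\left(d(x_{m_k+1},x_{n_k+1})\right).
\]
By continuity of $F$ at $\varepsilon$ the right-hand side tends to $F(\varepsilon)-F(\varepsilon)=0$; on the other hand $d(x_{m_k},x_{n_k})\ge\varepsilon$ and $d(x_{m_k},x_{n_k})\to\varepsilon$, so by $(iii)$ together with $\varphi(\varepsilon)>0$ the left-hand side stays above a fixed positive constant for large $k$. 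This contradiction shows $(x_n)_n$ is Cauchy, hence convergent to some $u\in X$ by completeness.

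Finally I would check that $u$ is the unique fixed point and that an arbitrary orbit converges to it. If $Tx_n=Tu$ for infinitely many $n$, then $x_{n+1}=Tu$ along that subsequence and, letting $n\to\infty$, $Tu=u$. Otherwise $Tx_n\ne Tu$ for all large $n$, and then \eqref{eqW} with $(i)$ and $\varphi>0$ gives $d(Tx_n,Tu)<d(x_n,u)\to 0$, so $x_{n+1}\to Tu$ and again $Tu=u$. If $u\ne v$ were two fixed points, then $Tu=u\ne v=Tv$ and $(u,v)$ in \eqref{eqW} gives $\varphi(d(u,v))+F(d(u,v))\le F(d(u,v))$, i.e. $\varphi(d(u,v))\le 0$, which is impossible. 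Since $x_0$ was arbitrary, the same reasoning applied to any starting point shows that $T^nx\to u$ for every $x\in X$, so $T$ is a Picard operator.

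The step I expect to be the real obstacle is the Cauchy property. For a strictly increasing $F$ that need not be continuous, the one-step estimate above leaves on its right-hand side (in the limit) the jump $F(\varepsilon+0)-F(\varepsilon-0)$, which may be large, so the naive version of the argument breaks down. What rescues it is the freedom to take the non-Cauchy threshold $\varepsilon$ among the co-countably many continuity points of the monotone function $F$, which makes that jump vanish. Apart from this point the proof is routine bookkeeping; the only recurring bit of care is to verify the side condition $Tx\ne Ty$ before each use of \eqref{eqW}.
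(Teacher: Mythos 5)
You are reading a statement that the paper itself does not prove: Theorem~\ref{Wardowski1} is quoted from Wardowski's 2018 paper \cite{Wardowski1} as background, so there is no in-paper proof to compare yours against; I can only assess your argument on its own merits, and it is correct and complete. The skeleton is the standard one: Picard iteration, $d_n=d(x_n,x_{n+1})$ strictly decreasing by $(i)$, then $d_n\to 0$ because a positive limit $d$ would give $\varphi(d_n)\ge\rho>0$ eventually by $(iii)$ and hence $F(d_n)\to-\infty$ while $F(d_n)>F(d)$; then Cauchyness by contradiction with minimal-index sequences, and existence/uniqueness of the fixed point from $\varphi>0$ and strict monotonicity of $F$ (you correctly verify $Tx\neq Ty$ before each use of \eqref{eqW}). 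The step you flag as the crux is indeed the only non-routine point, and your remedy is legitimate: since $F$ is only strictly increasing, the one-step estimate at the non-Cauchy threshold leaves the jump $F(\varepsilon^+)-F(\varepsilon^-)$ on the right-hand side, and replacing $\varepsilon_0$ by a smaller threshold $\varepsilon\in(0,\varepsilon_0]$ chosen among the continuity points of the monotone $F$ (all but countably many points qualify, and any smaller threshold still witnesses failure of the Cauchy property) makes the right-hand side tend to $0$, while $(iii)$ at $t=\varepsilon$ together with $\varphi(\varepsilon)>0$ keeps the left-hand side bounded away from $0$ — you rightly treat the case $d(x_{m_k},x_{n_k})=\varepsilon$ separately. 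This device fits nicely with Section~2 of the paper, where a right-discontinuity of $F$ is precisely what is exploited to produce an $F$-contraction that is not Meir--Keeler; your proof explains why such a jump nevertheless cannot obstruct convergence of a single orbit. Two minor remarks: hypothesis $(ii)$ is never actually used in your argument (your parenthetical note is accurate — the monotonicity of $d_n$ already suffices), and in the degenerate branch $x_{n_0}=x_{n_0+1}$ you should state explicitly that the orbit converges to that fixed point, which by your uniqueness step coincides with $u$, so the Picard property holds for every starting point.
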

D. Wardowski \cite{Wardowski1} obtained applications of this result to the following equation of Volterra type
$$x(t)=\int_0^tK(t,s,x(s))ds+h(t)$$ and afterwards to the equation $$\displaystyle V(t,x(t))=S(t,\int_0^tH(t,s,x(s)))ds.$$

 In the paper \cite{Manolescu}, we raised the following problem: under what conditions $(\varphi,F)-$contractions are Meir-Keeler contractions? In this sense, we obtained the following result.

\begin{thm} (Manolescu, G\u avru\c ta, Khojasteh \cite{Manolescu})\label{Manolescu}
Let $(X,d)$ be a complete metric space and $T:X\rightarrow X$ be a $(\varphi,F)$-contraction. We suppose that $F$ is nondecreasing and $F$ is continuous at right. If $\varphi$ is lower semi-continuous, then $T$ is a Meir-Keeler contraction.
\end{thm}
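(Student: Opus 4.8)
The plan is to verify the Meir--Keeler condition directly from the $(\varphi,F)$-contraction inequality, arguing by contradiction at each threshold $\varepsilon$. Fix $\varepsilon>0$. If $Tx=Ty$ then $d(Tx,Ty)=0<\varepsilon$ automatically, so throughout we may assume $Tx\neq Ty$; then $d(Tx,Ty)>0$ and the contraction inequality \eqref{eqW} applies.

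Suppose, toward a contradiction, that $d(Tx,Ty)\geq\varepsilon$. Since $F$ is nondecreasing, $F(d(Tx,Ty))\geq F(\varepsilon)$, so \eqref{eqW} gives
$$\varphi(d(x,y))\leq F(d(x,y))-F(d(Tx,Ty))\leq F(d(x,y))-F(\varepsilon).$$
The idea is then to bound the left-hand side below away from $0$ and the right-hand side above by an arbitrarily small quantity once $d(x,y)$ is close enough to $\varepsilon$ from the right, which is impossible.

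For the lower bound I would use lower semicontinuity of $\varphi$ at the point $\varepsilon$: since $\varphi(\varepsilon)>0$, applying the definition of lower semicontinuity with $\lambda=\varphi(\varepsilon)/2$ yields an $\eta>0$ such that $\varphi(s)>\varphi(\varepsilon)/2=:c>0$ for every $s$ with $|s-\varepsilon|<\eta$, in particular for every $s\in[\varepsilon,\varepsilon+\eta)$. For the upper bound I would use right-continuity of $F$ at $\varepsilon$: from $\lim_{s\to\varepsilon^+}F(s)=F(\varepsilon)$ we get $\delta_1>0$ with $F(s)-F(\varepsilon)<c$ for all $s\in[\varepsilon,\varepsilon+\delta_1)$. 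Now set $\delta:=\min\{\eta,\delta_1\}$. If $\varepsilon\leq d(x,y)<\varepsilon+\delta$ and $d(Tx,Ty)\geq\varepsilon$, the displayed inequality forces $c<\varphi(d(x,y))\leq F(d(x,y))-F(\varepsilon)<c$, a contradiction. Hence $d(Tx,Ty)<\varepsilon$ for all such $x,y$, and $T$ is a Meir--Keeler contraction.

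The argument is short; the only point requiring care is that each hypothesis is a \emph{one-sided, pointwise} statement at the single value $\varepsilon$ --- lower semicontinuity of $\varphi$ at $\varepsilon$ (not continuity, and not on a whole interval) and right-continuity of $F$ at $\varepsilon$ --- and one must check that these local facts already deliver the uniform constants $c$ and $\delta$ needed to close the contradiction. No global regularity of $\varphi$ or $F$, and none of the auxiliary conditions $(F_2)$--$(F_3)$, is needed.
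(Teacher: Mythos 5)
Your proof is correct. Note first that the paper itself does not reprove this theorem --- it quotes it from \cite{Manolescu} and then establishes a generalization (the theorem that follows it), whose proof is by contradiction: assuming $T$ is not Meir--Keeler, one extracts sequences $(x_n),(y_n)$ with $\varepsilon_0\le d(x_n,y_n)<\varepsilon_0+2^{-n}$ and $d(Tx_n,Ty_n)\ge\varepsilon_0$, rules out $d(x_n,y_n)=\varepsilon_0$, and derives $\limsup\varphi(t_n)\le F(\varepsilon_0^+)-F(\varepsilon_0)$, contradicting the hypothesis $\limsup_{t_n\to t,\,t_n>t}\varphi(t_n)>F(t^+)-F(t)$; the stated theorem is then the special case where right-continuity of $F$ kills the jump $F(t^+)-F(t)$ and lower semicontinuity of $\varphi$ keeps the $\limsup$ positive. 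Your route is genuinely different in style: a direct $\varepsilon$--$\delta$ verification of the Meir--Keeler condition, producing an explicit $\delta=\min\{\eta,\delta_1\}$ from the pointwise lower bound $\varphi>c:=\varphi(\varepsilon)/2$ near $\varepsilon$ (lower semicontinuity) and the bound $F(s)-F(\varepsilon)<c$ for $s\in[\varepsilon,\varepsilon+\delta_1)$ (right-continuity plus monotonicity), and your closing contradiction $c<c$ is sound; you are also right that completeness plays no role in this implication. What your direct argument buys is constructiveness and locality (only the behavior at the single threshold $\varepsilon$ is used); what the paper's sequence-based formulation buys is the extra generality of its $\limsup$ condition, which tolerates a right jump of $F$ at $t$ provided the oscillation of $\varphi$ from the right exceeds it --- a situation your two-sided bounds $c$ and $\delta_1$ would not handle, but which the stated theorem does not require.
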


In the next Theorem, we give a more generalized result.

\begin{thm} Let be $F:(0,\infty)\rightarrow\mathbb{R}$ a nondecreasing mapping and $\varphi:(0,\infty)\rightarrow (0,\infty)$ be such that 
\begin{equation*}
\limsup_{\substack{t_n\rightarrow t\\ t_n>t}}\varphi(t_n)>F(t^+)-F(t), t\in(0,\infty).
\end{equation*}
Let $(X,d)$ be a complete metric space and $T$ be a $(\varphi, F)$-contraction on $X.$
Then $T$ is Meir-Keeler contraction.
\end{thm}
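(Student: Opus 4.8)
The plan is a contradiction argument: assume $T$ fails the Meir-Keeler condition, extract from that failure a sequence of ``bad'' distances tending to some $\varepsilon$ from the right, and show that along this sequence $\varphi$ is forced to stay below the right-hand jump $F(\varepsilon^+)-F(\varepsilon)$ of $F$, contradicting the hypothesis.

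First I would record the elementary fact that $T$ is contractive. If $Tx\neq Ty$, then since $\varphi>0$ and $F$ is nondecreasing the defining inequality gives $F(d(Tx,Ty))\le F(d(x,y))-\varphi(d(x,y))<F(d(x,y))$, whence $d(Tx,Ty)<d(x,y)$ (were $d(Tx,Ty)\ge d(x,y)$, monotonicity of $F$ would contradict the previous line). Now suppose $T$ is not a Meir-Keeler contraction. Then there is $\varepsilon>0$ such that for every $n\ge 1$ there are $x_n,y_n\in X$ with $\varepsilon\le d(x_n,y_n)<\varepsilon+\tfrac1n$ and $d(Tx_n,Ty_n)\ge\varepsilon$. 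Write $t_n:=d(x_n,y_n)$. From $d(Tx_n,Ty_n)\ge\varepsilon>0$ we get $Tx_n\neq Ty_n$, and contractivity yields $t_n>d(Tx_n,Ty_n)\ge\varepsilon$; also $t_n\to\varepsilon$. Thus $(t_n)$ is a sequence with $t_n>\varepsilon$ and $t_n\to\varepsilon$.

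For the key estimate, apply the $(\varphi,F)$-contraction inequality to $(x_n,y_n)$ and use that $F$ is nondecreasing with $d(Tx_n,Ty_n)\ge\varepsilon$ to obtain $\varphi(t_n)\le F(t_n)-F(d(Tx_n,Ty_n))\le F(t_n)-F(\varepsilon)$. Since $F$ is nondecreasing, $F(\varepsilon^+)=\inf_{s>\varepsilon}F(s)$ is a finite real number and $F(t_n)\to F(\varepsilon^+)$ because $t_n>\varepsilon$ and $t_n\to\varepsilon$. Passing to $\limsup$ gives $\limsup_{n\to\infty}\varphi(t_n)\le F(\varepsilon^+)-F(\varepsilon)$. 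But $(t_n)$ converges to $\varepsilon$ from the right, so the hypothesis with $t=\varepsilon$ forces $\limsup_{n\to\infty}\varphi(t_n)>F(\varepsilon^+)-F(\varepsilon)$, a contradiction. Hence $T$ is a Meir-Keeler contraction.

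The only subtle point — and the one I would flag as the real obstacle — is how the standing hypothesis should be read: it must mean ``$\limsup_{n}\varphi(t_n)>F(t^+)-F(t)$ for \emph{every} sequence $t_n\to t$ with $t_n>t$'', which is precisely what the last step needs, since the sequence $(t_n)$ coming out of the contradiction is handed to us, not chosen. Under the weaker reading ``$\limsup_{s\to t^+}\varphi(s)>F(t^+)-F(t)$'' the assertion is false: imitating the construction of Theorem~\ref{ourfirstmainthm} one can concentrate the bad distances where $\varphi$ is small while keeping $\limsup_{s\to t^+}\varphi(s)$ large, producing a $(\varphi,F)$-contraction on a closed subset of $\mathbb{R}$ that is not Meir-Keeler. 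Finally, note that completeness of $X$ is not used above; it enters only afterwards, through the Meir-Keeler theorem, to conclude that $T$ is a Picard operator.
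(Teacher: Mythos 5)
Your proof is correct and takes essentially the same route as the paper: negate the Meir--Keeler condition, extract sequences with $\varepsilon\le d(x_n,y_n)<\varepsilon+1/n$ and $d(Tx_n,Ty_n)\ge\varepsilon$, combine the $(\varphi,F)$-inequality with the monotonicity of $F$ to get $\varphi(t_n)\le F(t_n)-F(\varepsilon)$, and let $t_n\to\varepsilon^+$ to contradict the hypothesis at $t=\varepsilon$. The only cosmetic difference is that you obtain the strict inequality $t_n>\varepsilon$ from contractivity of $T$, while the paper rules out $d(x_n,y_n)=\varepsilon$ by noting it would force $\varphi(\varepsilon)\le 0$; and your ``for every sequence'' reading of the hypothesis is exactly how the paper applies it.
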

\begin{proof} We suppose that $T$ is not a Meir-Keeler contraction, i.e.
$$(\exists) \varepsilon_0~\textrm{such that}~(\forall)\delta>0,~(\exists)x_{\delta}, y_{\delta}\in X~\textrm{such that}~ 
\varepsilon_0\leq d(x_{\delta},y_{\delta})<\varepsilon_0+\delta$$
and $$d(Tx_{\delta},Ty_{\delta})\geq\varepsilon_0.$$
We take here $\delta=2^{-n}, n\in\mathbb{N}.$ It follows that there exists two sequences $\{x_n\},\{y_n\}\subset X$ such that
$$\varepsilon_0\leq d(x_n,y_n)<\varepsilon_0+2^{-n}$$
and $$d(Tx_n, Ty_n)\geq \varepsilon_0, n\in\mathbb{N}.$$
From the above inequality it follows $d(Tx_n, Ty_n)\neq 0,$ hence
$$\varphi(d(x_n,y_n))+F(d(Tx_n,Ty_n))\leq F(d(x_n,y_n))$$ and, since $F$ is nondecreasing, we have 
$$F(d(Tx_n,Ty_n))\geq F(\varepsilon_0).$$
We deduce $\varphi(d(x_n,y_n))\leq F(d(x_n,y_n))-F(\varepsilon_0).$ We observe that $$d(x_n,y_n)>\varepsilon_0,~(\forall) n.$$
Indeed, if there exists $n,$ with $d(x_n, y_n)=\varepsilon_0,$ it follows $$\varphi(\varepsilon_0)\leq 0,$$ contradiction with the hypothesis. Then, with $t_n:=d(x_n,y_n),~n\in\mathbb{N},$
we deduce that $$\limsup_{\substack{t_n\rightarrow \varepsilon_0\\t_n>\varepsilon_0}}\varphi(t_n)\leq F(\varepsilon_0^+)-F(\varepsilon_0),$$ contradiction with the hypothesis.
\end{proof}
\section{A class of $(E,F)$-contractions that are Meir-Keeler contractions}

 In the paper \cite{Gavruta1}, we introduced the class of $(E,F)$-contractions and considered for the first time a condition that does not refer to the monotony of any of the functions. In this section, we construct a class of $(E,F)$-contractions that are Meir-Keeler contractions.

Let be the functions $E,F:(0,\infty)\rightarrow\mathbb{R}.$ We say that $(E,F)$ satisfies condition $(C_1)$ if
$$\textrm{for}~t,s\in(0,\infty),~t\leq s\Longrightarrow E(t)<F(s).$$
\begin{prop}\label{propEF}
1. We suppose that the pair $(E,F)$ satisfies condition $(C_1).$ Then $E(t)<F(t), t>0.$\\
2. If $E(t)<F(t),$ $t>0$ and $F$ is nondecreasing, then $(E,F)$ satisfies condition $(C_1).$
\end{prop}
We give an example of functions $(E,F)$ that satisfies condition $(C_1),$ but none of them is not monotone.
\begin{ex} Let be $F:(0,\infty)\rightarrow\mathbb{R},$
$$F(t)=\begin{cases} \dfrac{5}{2}, 0<t<\dfrac{1}{2}\\
\\
\dfrac{1+t^2}{t}, t\geq\frac{1}{2}
\end{cases}$$
We take $E=\lambda F,$ where $0<\lambda<\dfrac{4}{5}.$
\end{ex}

\begin{defn} We suppose that $(E,F)$ satisfies the condition $(C_1).$ We say that $T:X\rightarrow X$ is $(E,F)$-contraction if the following condition holds
\begin{equation}\label{defEF}
Tx\neq Ty\Longrightarrow F(d(Tx,Ty))\leq E(d(x,y)).
\end{equation}
\end{defn}

\begin{prop}\label{propEF2} If $T:X\rightarrow X$ is $(E,F)$-contraction, then $T$ is contractive.
\end{prop}
\begin{proof} We have to prove that if $x\neq y$ it follows that $d(Tx,Ty)<d(x,y).$\\
If $Tx=Ty,$ it is clear.\\
If $Tx\neq Ty,$ we suppose $d(x,y)\leq d(Tx, Ty).$ From condition $(C_1),$ it follows
$$E(d(x,y))<F(d(Tx, Ty)),$$ contradiction with relation (\ref{defEF}).
\end{proof}

\begin{thm} Let be $E,F:(0,\infty)\rightarrow\mathbb{R}$ such that
the following condition holds
$$(C_1)~t\leq s\Longrightarrow E(t)<F(s).$$
We suppose that 
\begin{enumerate}[$i)$]
\item $F$ is nondecreasing and $\displaystyle\liminf_{\substack{t_n\rightarrow t\\t_n>t}}E(t_n)<F(t), (\forall) t>0$ or
\item $F$ is continuous on the right and $\displaystyle\liminf_{\substack{t_n\rightarrow t\\t_n\geq t}}E(t_n)<F(t), (\forall) t>0$
\end{enumerate}
If $T$ is $(E,F)$-contraction on the complete metric space $(X,d)$, then $T$ is a Meir-Keeler contraction.
\end{thm}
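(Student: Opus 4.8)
I would argue by contradiction, exactly in the spirit of the two preceding proofs. Assume $T$ is \emph{not} a Meir-Keeler contraction. Then there is an $\varepsilon_0>0$ such that for every $\delta>0$ there exist $x_\delta,y_\delta\in X$ with $\varepsilon_0\le d(x_\delta,y_\delta)<\varepsilon_0+\delta$ and $d(Tx_\delta,Ty_\delta)\ge\varepsilon_0$. Taking $\delta=2^{-n}$ yields sequences $\{x_n\},\{y_n\}$ with $\varepsilon_0\le t_n:=d(x_n,y_n)<\varepsilon_0+2^{-n}$ and $d(Tx_n,Ty_n)\ge\varepsilon_0$. In particular $d(Tx_n,Ty_n)>0$, so $Tx_n\neq Ty_n$ and the defining inequality (\ref{defEF}) gives $F(d(Tx_n,Ty_n))\le E(t_n)$ for every $n$; also $x_n\neq y_n$, and then Proposition~\ref{propEF2} (contractivity) yields $\varepsilon_0\le d(Tx_n,Ty_n)<d(x_n,y_n)<\varepsilon_0+2^{-n}$, so $s_n:=d(Tx_n,Ty_n)$ converges to $\varepsilon_0$ from above as well, while $t_n\to\varepsilon_0$.

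Next I would dispose of the possibility $t_n=\varepsilon_0$. If $t_n=\varepsilon_0$ for some $n$, then $\varepsilon_0=t_n\le s_n$, so condition $(C_1)$ gives $E(t_n)=E(\varepsilon_0)<F(s_n)=F(d(Tx_n,Ty_n))\le E(t_n)$, which is absurd; hence $t_n>\varepsilon_0$ for all $n$. This step is only needed for case $(i)$, whose limiting hypothesis is formulated over $t_n>t$.

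The two hypotheses are then handled separately. In case $(i)$, $F$ is nondecreasing and $s_n\ge\varepsilon_0$, so $F(d(Tx_n,Ty_n))\ge F(\varepsilon_0)$; together with $F(d(Tx_n,Ty_n))\le E(t_n)$ this gives $E(t_n)\ge F(\varepsilon_0)$ for every $n$, hence $\liminf_{n}E(t_n)\ge F(\varepsilon_0)$, contradicting hypothesis $(i)$ at $t=\varepsilon_0$ (recall $t_n\to\varepsilon_0$, $t_n>\varepsilon_0$). In case $(ii)$, $F$ is continuous on the right and $s_n\to\varepsilon_0$ with $s_n\ge\varepsilon_0$, so $F(s_n)\to F(\varepsilon_0)$; from $E(t_n)\ge F(s_n)$ we again obtain $\liminf_{n}E(t_n)\ge F(\varepsilon_0)$, which contradicts hypothesis $(ii)$ at $t=\varepsilon_0$ (here $t_n\to\varepsilon_0$, $t_n\ge\varepsilon_0$). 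Either way the assumption is untenable, so $T$ is a Meir-Keeler contraction.

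I expect the only genuinely delicate point to be case $(ii)$: when $F$ is merely right-continuous and not monotone, one cannot bound $F(d(Tx_n,Ty_n))$ below by $F(\varepsilon_0)$ directly, and the idea that makes it work is to first invoke contractivity (Proposition~\ref{propEF2}) to trap $d(Tx_n,Ty_n)$ inside $[\varepsilon_0,\varepsilon_0+2^{-n})$, so that right-continuity of $F$ may be applied along $\{s_n\}$. The remainder is the routine ``violating Meir-Keeler forces a limiting inequality that contradicts the hypothesis'' bookkeeping, parallel to Theorem~\ref{Manolescu} and the theorem preceding this one. Note that completeness of $X$ is not used in this implication; it is assumed only so that the conclusion can be combined with the Meir-Keeler fixed point theorem.
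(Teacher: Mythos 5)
Your proof is correct and follows essentially the same route as the paper: negate the Meir-Keeler condition, extract sequences with $t_n=d(x_n,y_n)\to\varepsilon_0$ and $s_n=d(Tx_n,Ty_n)\ge\varepsilon_0$, use $F(s_n)\le E(t_n)$ together with monotonicity of $F$ in case $(i)$ and with contractivity (Proposition~\ref{propEF2}) plus right-continuity along $s_n\to\varepsilon_0$ in case $(ii)$, and contradict the liminf hypothesis at $t=\varepsilon_0$. The only cosmetic differences are that you eliminate the possibility $t_n=\varepsilon_0$ directly from $(C_1)$ applied to the pair $(t_n,s_n)$ rather than via $E(\varepsilon_0)<F(\varepsilon_0)$ with $F$ nondecreasing, and your case $(ii)$ correctly states the limit as $F(s_n)\to F(\varepsilon_0)$, which is what the paper's concluding display intends.
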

\begin{proof} We suppose that $T$ is not a Meir-Keeler contraction, hence 
there exists $\varepsilon_0$ and two sequences $(x_n), (y_n)\subset X$ such that
\begin{equation*}
\varepsilon_0\leq d(x_n,y_n)<\varepsilon_0+2^{-n}
\end{equation*}
and \begin{equation}\label{eqdem}
d(Tx_n,T y_n)\geq\varepsilon_0,~n\in\mathbb{N}.
\end{equation}
From the above inequality, it follows that $d(Tx_n,Ty_n)\neq 0$ and so 
\begin{equation}\label{steluta}
F(d(Tx_n,Ty_n))\leq E(d(x_n,y_n)),~n\in\mathbb{N}.
\end{equation}
If $F$ is nondecreasing, from (\ref{eqdem}) and (\ref{steluta}), we have
\begin{equation}\label{dublusteluta}
F(\varepsilon_0)\leq F(d(Tx_n,Ty_n))\leq E(d(x_n,y_n)).
\end{equation}
If there exists $n$, with $d(x_n,y_n)=\varepsilon_0,$ from (\ref{dublusteluta}), we obtain
$$F(\varepsilon_0)\leq E(\varepsilon_0),$$
contradiction with the fact that $E(t)<F(t),$ $t>0,$ which follows from $(C_1).$ If we define $t_n:=d(x_n,y_n)$, then $$t_n>\varepsilon_0, n\in\mathbb{N},~t_n\rightarrow\varepsilon_0.$$
From (\ref{dublusteluta}), it follows that
$$F(\varepsilon_0)\leq\liminf_{n\rightarrow\infty} E(t_n),$$ contradiction with the hypothesis.\\
\\
We consider now that $F$ is continuous on the right. From Proposition \ref{propEF2}, it follows that $T$ is contractive, hence 
\begin{equation}\label{eqEF}
\varepsilon_0\leq d(Tx_n, T y_n)<d(x_n,y_n).\end{equation}
We denote by $t_n:=d(x_n,y_n)$ and $s_n:=d(Tx_n,Ty_n), ~n\in\mathbb{N}.$\\
We have $t_n\rightarrow\varepsilon_0,~t_n\geq\varepsilon_0,~n\in\mathbb{N}$
and from (\ref{eqEF}), it follows that $$s_n\geq\varepsilon_0, n\in\mathbb{N}~\textrm{and}~ s_n\rightarrow\varepsilon_0.$$
From (\ref{steluta}) it follows that $F(s_n)\leq E(t_n),~n\in\mathbb{N},$ hence $$\liminf_{n\rightarrow\infty} F(s_n)\leq\liminf_{n\rightarrow\infty} E(t_n).$$
Since $F$ is continuous on the right, it follows that $\displaystyle F(\varepsilon_0)\leq\liminf_{n\rightarrow\infty} F(t_n).$
 contradiction with the hypothesis.

\end{proof}

\begin{cor} Let be the functions $E,F:(0,\infty)\rightarrow\mathbb{R}$ such that
\begin{enumerate}[$1)$]
    \item $E(t)<F(t)$, for all $t>0;$
    \item $F$ is nondecreasing;
    \item $\displaystyle\liminf_{n\rightarrow\infty}E(t_n)<F(\varepsilon),$ $(\forall)\varepsilon>0$ and $(\forall)~ (t_n)\subset(\varepsilon,\infty)$
\end{enumerate}

Let $(X,d)$ be a complete metric space and $T:X\rightarrow X$ be a $(E,F)$-contraction. Then $T$ is a Meir-Keeler contraction.
\end{cor}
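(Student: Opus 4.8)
The plan is to derive this corollary as an immediate consequence of the preceding theorem, in its variant $i)$. Concretely, I would show that hypotheses $1)$, $2)$, $3)$ of the corollary imply everything that theorem requires, namely: that the pair $(E,F)$ satisfies condition $(C_1)$; that $F$ is nondecreasing; and that $\displaystyle\liminf_{\substack{t_n\to t\\ t_n>t}}E(t_n)<F(t)$ for every $t>0$. Once these three items are in place, the theorem gives at once that $T$ is a Meir–Keeler contraction.

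First I would settle $(C_1)$. By $1)$ we have $E(t)<F(t)$ for all $t>0$, and by $2)$ the function $F$ is nondecreasing; hence Proposition \ref{propEF} (second assertion) shows that $(E,F)$ satisfies $(C_1)$. Condition $2)$ is literally the monotonicity assumption on $F$ demanded in $i)$, so there is nothing further to verify for that item.

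It remains to check the one-sided $\liminf$ inequality. Fix $t>0$ and take any sequence $(t_n)$ with $t_n\to t$ and $t_n>t$; then $(t_n)\subset(t,\infty)$, so hypothesis $3)$ applied with $\varepsilon=t$ yields $\displaystyle\liminf_{n\to\infty}E(t_n)<F(t)$. Since this holds for every such sequence, the condition in $i)$ is satisfied, and the preceding theorem applies. I do not expect a genuine obstacle here: the argument is a routine specialization, and the only point requiring care is notational, namely reconciling the quantifier ``for all $(t_n)\subset(\varepsilon,\infty)$'' in $3)$ with the meaning of the symbol $\displaystyle\liminf_{\substack{t_n\to t\\ t_n>t}}$ in $i)$. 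If one prefers to avoid invoking the theorem, one can simply rerun its contradiction argument: assuming $T$ is not Meir–Keeler one obtains $\varepsilon_0$ and sequences with $\varepsilon_0\le d(x_n,y_n)<\varepsilon_0+2^{-n}$ and $d(Tx_n,Ty_n)\ge\varepsilon_0$, whence (using $2)$ and $(C_1)$ via $1)$) $t_n:=d(x_n,y_n)\in(\varepsilon_0,\infty)$ and $F(\varepsilon_0)\le E(t_n)$ for all $n$, while $3)$ with $\varepsilon=\varepsilon_0$ forces $\liminf_n E(t_n)<F(\varepsilon_0)$ — a contradiction.
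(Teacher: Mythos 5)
Your proposal is correct and is essentially the intended argument: the paper states this corollary without a separate proof precisely because it is the direct specialization of the preceding theorem, case $i)$, with $(C_1)$ supplied by Proposition \ref{propEF}(2) from hypotheses $1)$ and $2)$, and the one-sided $\liminf$ condition obtained from $3)$ with $\varepsilon=t$ (any sequence $t_n\to t$, $t_n>t$, lies in $(t,\infty)$). Your optional direct rerun of the contradiction argument is also sound and matches the theorem's proof verbatim.
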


\section{Conclusion}\label{sec13}
We give an example of a $F-$contraction that is not Meir-Keeler contraction, and so we correct a result from the paper \cite{Cvetkovic}. Also, we obtain new conditions for a class of nonlinear $F-$contractions (in the sense of the paper \cite{Wardowski1}) and for a class of $(E,F)-$ contractions (in the sense of the paper \cite{Gavruta1}) to be Meir-Keeler contractions.











\end{document}